\definecolor{newcolor}{rgb}{.8,.349,.1}
\newtheorem{theorem}{Theorem}
\title{On smooth families of exact forms}
\author{Jes\'us F. Espinoza}
\email{jesus.espinoza@mat.uson.mx}
\author{Rafael Ramos}
\email{rramos@mat.uson.mx}
\address{Departamento de Matem\'aticas, Universidad de Sonora. Hermosillo, Sonora, M\'exico.}
\keywords{$k$-forms, \v{C}ech-de Rham complex, good cover, vector field.}
\begin{document}
\maketitle
\begin{abstract}
For a smooth family of exact forms on a smooth manifold, an algorithm for computing a primitive family smoothly dependent on parameters is given. The algorithm is presented in the context of a diagram chasing argument in the \v{C}ech-de Rham complex. In addition, explicit formulas for such primitive family are presented.
\end{abstract}

\section{Introduction}
In various problems of differential and symplectic geometry, the following fact plays a fundamental role \cite{McDuff}.

\begin{theorem}\label{AAA}
Let $\{ \omega_x\}_{x \in \mathcal{P}}$ be a family of  exact $(k+1)$-forms on a smooth manifold $M$, which smoothly depends on a parameter $x$ that takes values in an open set $\mathcal{P} \subseteq \mathbb{R}^n$. There then exists a smooth family of $k$-forms $\{ \tau_{x}\}_{x \in \mathcal{P}}$ on $M$ such that $d \tau_{x} = \omega_{x}$ for each $x \in \mathcal{P}$.
\end{theorem}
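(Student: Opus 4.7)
The plan is to carry out the usual zigzag argument in the \v{C}ech-de Rham bicomplex that proves the de Rham theorem, but to do it ``with parameters,'' checking at each step that smooth dependence on $x\in\mathcal{P}$ is preserved. The key observation that makes this work is that every elementary operation appearing in the chase can be chosen to be a canonical $\mathbb{R}$-linear operator that is local on $M$ and entirely $x$-independent, so smooth $x$-dependence is transported automatically through the argument.

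Concretely, I would first fix a good cover $\mathcal{U}=\{U_\alpha\}_{\alpha\in A}$ of $M$ together with a subordinate partition of unity $\{\rho_\alpha\}$, chosen once and for all, independently of $x$. Next I would construct two homotopy operators: a de Rham homotopy $K$, defined on each contractible intersection $U_{\alpha_0\cdots\alpha_p}\cong\mathbb{R}^{\dim M}$ by the classical Poincar\'e-lemma integral $K\eta=\int_0^1\iota_{X_t}h_t^{\,*}\eta\,dt$ along a smooth straight-line retraction, and a \v{C}ech homotopy $H$ built from the partition of unity by $(Hc)_{\alpha_0\cdots\alpha_{p-1}}=\sum_\gamma \rho_\gamma\,c_{\gamma\alpha_0\cdots\alpha_{p-1}}$. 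Both operators commute with $\partial/\partial x_i$: the fibre integral $\int_0^1$ and the pullbacks $h_t^{\,*}$ are $x$-independent, and so are the $\rho_\gamma$. Starting from $\{\omega_x\}$ at bidegree $(0,k{+}1)$, one then alternately applies $K$ and the \v{C}ech differential $\delta$ to move along the diagonal of the bicomplex, reaching a \v{C}ech cocycle of locally constant functions at bidegree $(k{+}1,0)$ after $k+1$ zigzag steps. Because $\omega_x$ is globally exact, this terminal cocycle is a \v{C}ech coboundary; $H$ produces an explicit cobounding cochain, which one then lifts back up the diagonal with further applications of $H$ and the $\rho_\alpha$ to assemble a global $k$-form $\tau_x$ with $d\tau_x=\omega_x$.

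The main difficulty, in my view, is not existence but the accounting required to extract a compact closed-form expression as promised in the abstract. One must fix consistent sign conventions so that the total differential $d\pm\delta$ squares to zero, verify at each zigzag step that the freshly produced cochain is $d$-closed (this step uses the globality of $\omega_x$), and finally combine all the operators $K$, $\delta$, $H$, together with the partition of unity, into a single formula in the spirit of an explicit chain homotopy. Once this is arranged, smoothness in $(x,p)\in\mathcal{P}\times M$ is inherited from the $x$-independence of every building block via standard differentiation under the integral sign on a compact neighbourhood of any fixed parameter value, yielding a $\tau_x$ of the regularity required.
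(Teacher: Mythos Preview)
Your outline is in the right spirit, but there is a genuine gap at the one place where the exactness hypothesis must actually be invoked. After zigzagging down you obtain a \v{C}ech cocycle $c_x\in C^{k+1}(\mathcal{U},\underline{\mathbb{R}})$ of locally constant functions, and you say that since $\omega_x$ is exact this is a \v{C}ech coboundary and that $H$ produces a cobounding cochain. But the partition-of-unity homotopy $H$ takes values in $C^{k}(\mathcal{U},\Omega^0)$, not in $C^{k}(\mathcal{U},\underline{\mathbb{R}})$: the cochain $Hc_x$ is a family of smooth functions, not of constants. The identity $\delta(Hc_x)=c_x$ therefore holds in the $\Omega^0$--row, and it holds for \emph{every} closed form $\omega_x$, exact or not; so at this step your argument has not yet used the hypothesis at all. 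Worse, with $b_x=Hc_x$ the backward zigzag cannot proceed: setting $\tilde\sigma^{(k)}_x=\sigma^{(k)}_x-Hc_x$ does make $\delta\tilde\sigma^{(k)}_x=0$, but now $d\tilde\sigma^{(k)}_x=\delta\sigma^{(k-1)}_x-d(Hc_x)$ with $d(Hc_x)\neq 0$, so the relation needed for the next step is destroyed. What you really need is a \emph{locally constant} $b_x$ with $\delta b_x=c_x$; such a $b_x$ exists for each fixed $x$ because $[c_x]=0$, but its smooth dependence on $x$ is not given by any of your canonical $x$-independent operators and requires a separate argument (essentially a choice of linear section of $\delta$ on the constant-sheaf \v{C}ech complex, which is unproblematic for finite covers but delicate for countable ones).

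The paper sidesteps this entirely by using the exactness hypothesis at the \emph{first} step rather than the last. It takes any (non-smooth in $x$) global primitive $\eta_x$, forms $a_x^\alpha=K(\tau_x^\alpha-\eta_x|_{U_\alpha})\in C^0(\Omega^{k-1})$, and observes that although $a_x^\alpha$ is not smooth in $x$, its \v{C}ech coboundary $g_x^{\alpha_0\alpha_1}=(\delta a_x)_{\alpha_0\alpha_1}$ \emph{is} smooth, because $\delta(r\eta_x)=0$ makes the $\eta_x$-contribution cancel. One application of $H$ to $g_x$ then gives a smooth $G_x^\alpha$, and $\tau_x^\alpha-dG_x^\alpha$ glues to the desired global primitive. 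Thus the paper needs a single zigzag step rather than the full descent to bidegree $(k{+}1,0)$, and it never has to cobound in the constant sheaf.
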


Even when this fact is commonly used, it is difficult to find a complete proof of Theorem \ref{AAA} in the existent bibliography. Furthermore, in cases when a complete proof is given, the algorithm for find such primitives or collating formulas remain more or less obscure because of the number of steps involved. For example, for a general case, a sketch of the proof of this theorem is given in \cite{Gotay}. A more detailed proof for a manifold $M$ of a finite type is given in \cite{Marcut}. In addition, in the case when $M$ is compact, Theorem \ref{AAA} can be proven by using the Hodge theory \cite{Ibort}, \cite{McDuff}. 

In the present article, for an arbitrary manifold $M$ (without boundary) and by using elementary tools, we give a complete constructive proof of Theorem \ref{AAA} consisting of a shorter algoritm than the proofs shown in \cite {Gotay} and \cite{Marcut}. In addition, we get explicit collating formulas for the primitive family. Even when only elementary arguments are necesary for our proof, we decided to present it in the context of a diagram chasing argument in the \v{C}ech-de Rham complex \cite{Bott} because in this way the proof is easier to follow.

Finally, in Section \ref{Section:application} we give an application of Theorem \ref{AAA} proving Theorem \ref{beta3}, which states the following: {\it Let $(M,\mathcal{F})$ be a foliated manifold such that the foliation $\mathcal{F}$ is given by a locally trivial submersion $p: M \to B$. If $H^r_{DR}(p^{-1}(b))=0$ for all $b \in B$ and an integer $r$, then $H^r(M, \phi^0(\mathcal{F}))=0$}.

Now we will proceed to describe the steps of the algorithm, which is divided into two cases. In the first case, we construct a family of exact $1$-forms and in the second one a family of exact $k$-forms such that $k \geq 2$ is constructed.

Before the algorithm, we prove the assertion of Theorem \ref{AAA} in the local case by applying the standard argument of the Poincar\'e lemma for forms that are smoothly dependent on parameters. In this way, associated with a countable good cover $\{U_\alpha \}_{\alpha \in J}$ of $M$, for each fixed $x$  we get a collection $\{ \tau^\alpha_x \}_{\alpha \in J}$ of $k$-forms (i.e, an element in $\Pi\Omega^k(U_\alpha)$) such that for each $\alpha$ the family $\{ \tau_x^\alpha  \}_{x \in\mathcal{P}}$ depends smoothly on the parameter $x$. 

Now we will describe the case for constructing a family of exact $0$-forms given a family of $1$-forms as in Theorem \ref{AAA}. The algorithm starts (Step 1) with the \v{C}ech-de Rham complex associated with the good covering of $M$, and by using a commutative square in such a complex, we construct a family of constants $C^{\alpha_0 \alpha_1}_x$ that smoothly depends on the parameter $x$, and  this family is defined for each $\alpha_0$, $\alpha _1$ in $J$ such that the intersection of $U_{\alpha_0}$ and $U_{\alpha_1}$ is not empty. In Step 2, by using the hypothesis of our main theorem, we define auxiliary constants $\widetilde{C}^\alpha_x$, which do not necessarily smoothly depend on the parameter $x$, but this allows us to smoothly extend the definition of the constants $C^{\alpha_0 \alpha_1}_x$ for all $\alpha_0$, $\alpha_1$ in $J$. In the Step 3, we define a new smooth family of $0$-forms $\{ \widetilde{\tau}_x^\alpha \}_{\alpha \in J}$ on the good cover $\{ U_{\alpha} \}_{\alpha \in J }$ by fixing any element $\alpha_0$ in $J$ and adding the constant $-C_x^{\alpha_0 \alpha}$ to the $0$-form $\tau_x^\alpha$ for each $\alpha$.

In the last step (Step 4), by using a diagram chasing argument in the \v{C}ech-de Rham complex, we prove that the family $\{ \widetilde{\tau}_x^\alpha  \}$ defined in Step 3 determines a family of 0-forms stated in Theorem \ref{AAA}.

Let us describe now the steps for constructing a smooth family of $k$-forms $\{ \tau_{x}\}_{x \in \mathcal{P}}$ for a given family $\{ \omega_x\}_{x \in \mathcal{P}}$ of exact $(k+1)$-forms. In the first step (Step 1), by using a diagram chasing argument in the \v{C}ech-de Rham complex associated with the good cover of $M$ for each fixed $x$ in $\mathcal{P}$, we define an element $\{ a_x^\alpha \}_{\alpha \in J}$ in $\Pi\Omega^{k-1}(U_\alpha)$. Such an element depends on the element $\{ \tau_x^\alpha \}_{\alpha \in J}$ and depends on the primitives of the collection of $(k+1)$-forms $\{ w_x \}$ on $M$. When $\alpha$ in $J$ is fixed, the family $\{ a_x^\alpha \}_{x\in \mathcal{P}}$ is not necessarily smooth on $x$. In Step 2, however, for each fixed $x$, we define an element $\{ g_x^{\alpha_0 \alpha_1}  \}_{\alpha_0 \alpha_1 \in J}$ in $\Pi\Omega^{k-1}(U_{\alpha_0 \alpha_1})$ that depends on the element $\{ a_x^\alpha  \}_{\alpha \in J}$ and that for each fixed pair $\alpha_0$, $\alpha_1$ the family $\{ g_x^{\alpha_0 \alpha_1} \}_{x \in \mathcal{P}}$ smoothly depends on $x$. 

In Step 3, we prove that the element $\{ g_x^{\alpha_0 \alpha_1} \}_{\alpha_0 \alpha_1 \in J}$ is a cocycle in $\Pi\Omega^{k-1}(U_{\alpha_0 \alpha_1})$. By then using the exactness of the \v{C}ech-de Rham complex, we define an element $\{ G_x^\alpha \}_{\alpha \in J}$ in $\Pi\Omega^{k-1}(U_\alpha)$, which depends on such a cocycle and we prove that for each fixed $\alpha$ in $J$ the family $\{ G_x^\alpha \}_{\alpha \in J}$ smoothly depends on $x$. In Step 4, for each $x$ we define a new element $\{ \widetilde{\tau}_x^\alpha \}_{\alpha \in J}$ in $\Pi\Omega^k(U_\alpha)$ by adding $\tau_x^\alpha$ to $d G_x^\alpha$ for each $\alpha$ in $J$. Thus for each $\alpha$ the family $\{ \widetilde{\tau}_x^\alpha \}_{x \in \mathcal{P}}$ smoothly depends on $x$. We then prove by a diagram chasing argument in the \v{C}ech-de Rham complex that the family $\{ \widetilde{\tau}_x^\alpha \}_{\alpha \in J}$ determines the family of $k$-forms requested in Theorem \ref{AAA}.


\section{The local case}\label{Section:local.case}
Let $M$ be a smooth manifold of dimension $d$. We recall that  an open cover $\{ U_{\alpha} \}$ of $M$ is called a {\it good cover} if all non-empty finite intersections $U_{\alpha_0} \cap \dots \cap U_{\alpha_p}$ are diffeomorphic to $\mathbb{R}^d$. Notice that all smooth manifold $M$ has a good cover \cite{Bott}. Actually, every smooth manifold $M$ that is second countable has a countable good cover. Indeed, by \cite{Warner} $M$ has a countable open cover $\{ V_r \}$ consisting of sets with compact closures. Let $\{U_s \}$
be an open good cover for $M$. For each $r$, $\bar{V_r}$ is covered by a finite number of elements in $\{ U_s \}$. By taking the union of such finite collections when $r$ varies in all the elements of $\{ V_r \}$, we get a countable coordinate good cover $\{ U_\alpha \}$ for the manifold $M$.

We will denote by $U_{\alpha_0 \cdots \alpha_p}$ the intersection $U_{\alpha_0} \cap \cdots \cap U_{\alpha_p}$ of the open sets $U_{\alpha_0}, \ldots, U_{\alpha_p}$ in the good cover. Furthermore, we denote by $\Omega^q(M)$ the $\mathbb{R}$-vector space of $q$-forms on $M$ and define $\Pi \Omega(U_{\alpha_0, \dots, \alpha_p})$ as the product of the collection $\{ \Omega^q(U_{\alpha_0, \dots, \alpha_p}) \}_{ (\alpha_0, \dots, \alpha_p) \in J^p }$.

On the other hand, let $\{ \omega_x \}_{ x \in \mathcal{P} }$ be a family of closed $(k+1)$-forms ($k\geq 0$) over the $d$-manifold $M$, which smoothly depends on the parameter $x$ in $\mathcal{P} \subseteq \mathbb{R}^n$. Thus, by the Poincar\'e lemma (as in \cite{Michor}) for each $x \in \mathcal{P}$ and any chart $(U, u:U \to \mathbb{R}^d)$ on $M$ with $U$ an open contractible subset of $M$, there exists a primitive $\eta_x$ of $\omega_x$ given by the formula
\begin{equation}\label{C}
\eta_x = \int_0^1 \dfrac{1}{t} \lambda_t^* \iota_I \omega_x dt,
\end{equation}
where $\lambda : \mathbb{R} \times U \rightarrow U$ is defined by $\lambda(t,m) = \lambda_t(m) = u^{-1}(tu(m))$, $I$ is the vector field $I(m)=m$ and $\iota_I$ is the insertion operator (cf. \cite{Michor}). It is clear that the $k$-forms  $\eta_x$ in (\ref{C}) smoothly depend on $x$. Therefore, we have a smooth family $\{ \eta_x \}_{x \in \mathcal{P}} $ of primitives.

Let $\{ U_\alpha \}_{\alpha \in J}$ be a fixed good cover of the manifold $M$ where the family of indexes $J$ is a countable ordered set. It follows that for each $\alpha$ in $J$ there exists a collection $\{\tau^{\alpha}_{x}\}_{x \in \mathcal{P}}$ with the following properties: If $k=0$, then $\tau^{\alpha}_{x} \in\Omega^{0}(U_{\alpha})$ for each $x$ in $\mathcal{P}$, $d\tau_x^\alpha = \omega_x |_{ U_\alpha}$ for each $x$ in $\mathcal{P}$ and the family ${ \{ \tau^{\alpha}_{x} \} }_{x \in \mathcal{P}}$ is smoothly dependent on $x$ in $\mathcal{P}$, i.e. the function $F^\alpha : U_\alpha \times \mathcal{P} \rightarrow \mathbb{R}$ defined by $F^\alpha(m,x) = \tau^{\alpha}_{x}(m)$ is  $\mathcal{C}^{\infty}$; otherwise ($k \geq 1$), $\tau^{\alpha}_{x} \in\Omega^{k}(U_{\alpha})$ for each $x$ in $\mathcal{P}$, $d\tau_x^\alpha = \omega_x |_{ U_\alpha}$ for each $x$ in $\mathcal{P}$, and the family ${ \{ \tau^{\alpha}_{x} \} }_{x \in \mathcal{P}}$ is smoothly dependent on the parameter $x$ in $\mathcal{P}$. This means that for each $\alpha$ and for each collection of smooth vector fields $X_1, \dots, X_k$ on $U_{\alpha}$ the function $F^\alpha_{X_1, \dots, X_k} : U_\alpha \times \mathcal{P} \rightarrow \mathbb{R}$ defined by $F^\alpha_{X_1, \dots, X_k}(m,x) = ( \tau^{\alpha}_{x}(X_1, \dots, X_k))(m)$ is  $\mathcal{C}^{\infty}$. 

We will use such functions $F^\alpha : U_\alpha \times \mathcal{P} \rightarrow \mathbb{R}$ and $F^\alpha_{X_1, \dots, X_k} : U_\alpha \times \mathcal{P} \rightarrow \mathbb{R}$, in our algorithm in the next section.

\section{An algorithm to construct a smooth primitive family}
In this section, we present an algorithm to construct a smooth family of $k$-forms $\{ \tau_{x}\}_{x \in \mathcal{P}}$ on a manifold $M$ such that $d \tau_{x} = \omega_{x}$ for each $x \in \mathcal{P}$ for a given family $\{ \omega_x\}_{x \in \mathcal{P}}$ of exact $(k+1)$-forms, which smoothly depends on a parameter $x$ in an open set $\mathcal{P} \subseteq \mathbb{R}^n$. 

The algorithm is divided into two cases. The first one is when $\{ \omega_x \}_{x \in \mathcal{P}}$ is a family of exact $1$-forms, and the second case is for exact $k$-forms such that $k \geq 2$.

\subsection{Constructing a smooth family of exact 1-forms}

Let $M$ be a smooth $d$-manifold, and let $\{ U_\alpha \}_{\alpha \in J}$ be a fixed good cover of $M$, with $J$ a countable ordered set. If $\{ \omega_x \}_{x \in P}$ is a smooth family of exact $1$-forms on $M$, then there exists a family $\{\eta_{x}\}_{x \in \mathcal{P}}$ of $0$-forms in $\Omega^{0}(M)$ with the property that $d \eta_{x} = \omega_{x}$ for each $x$ in $\mathcal{P}$.

For each $\alpha$ in $J$, let $\{\tau^{\alpha}_{x}\}_{x \in \mathcal{P}}$ be the collection of 0-forms defined in Section \ref{Section:local.case}, and let $F^\alpha : U_\alpha \times \mathcal{P} \rightarrow \mathbb{R}$ be the $\mathcal{C}^{\infty}$-map defined by $F^\alpha(m,x) = \tau^{\alpha}_{x}(m)$.

\subsection*{Step 1. Defining a smooth family of constants}

Let us associate the \v{C}ech-de Rham complex with the good cover $\{ U_{\alpha} \}$ of the manifold $M$; see \cite{Bott}. We have the following commutative diagram with exact rows:
\begin{gather}
\begin{aligned}
\xymatrix{
                                                                             &
                                                                             & 
                                                                             &
                                                                             &
                                                                             &           
                                                                             \\ 
        0 \ar[r]                                                             &
        \Omega^1(M) \ar[r]^-{r}\ar[u]                                         &
        \Pi \Omega^{1}(U_{\alpha_{0}}) \ar[r]^-{\delta_1}\ar[u]               &
        \Pi\Omega^{1}(U_{\alpha_{0} \alpha_1 }) \ar[r]^-{\delta_1}\ar[u]      &
        \Pi \Omega^{1}(U_{\alpha_{0}\alpha_1 \alpha_2}) \ar[r]\ar[u]         &
        \cdots                                                               \\
        0 \ar[r]                                                             &
        \Omega^0(M) \ar[r]^-{r} \ar[u]^-{d_0}                                  &
        \Pi \Omega^{0}(U_{\alpha_{0}}) \ar[r]^-{\delta_0}\ar[u]^-{d_o}         &
        \Pi\Omega^{0}(U_{\alpha_{0}\alpha_1}) \ar[u]_{d_o}^{(\mathrm{I}) \ \ \ \ \ \ \ \ \ \ } \ar[r]^-{\delta_0} &
        \Pi\Omega^{0}(U_{\alpha_{0}\alpha_1 \alpha_2}) \ar[r] \ar[u]^-{d_0}   &
        \cdots
         }
\end{aligned}
\label{1}
\end{gather}

In every vertical arrow, $d$ denotes the differential operator induced by the usual differential operator of the de Rham complex. Moreover, for every $q \in \mathbb{Z}$ let us introduce the coboundary operator 
\[ \delta : \Pi\Omega^{q}(U_{\alpha_{0} \dots \alpha_p }) \rightarrow \Pi\Omega^{q}(U_{\alpha_{0} \dots \alpha_{p+1} }) \]
defined as follows: If $\omega$ belongs to $\Pi\Omega^{q}(U_{\alpha_{0} \dots \alpha_p })$, then $\omega$ has components $\omega_{\alpha_0 \dots \alpha_p}$ in $\Omega^{q}(U_{\alpha_{0} \dots \alpha_p })$, and the components of $\delta \omega$ are given by
\[ (\delta \omega)_{\alpha_0 \dots \alpha_{p+1}} = \sum_{i=0}^{p+1}(-1)^i \omega_{\alpha_0 \dots \hat \alpha_i \dots \alpha_{p+1}}\mid_{U_{\alpha_0 \dots \alpha_i \dots \alpha_{p+1}}}.\]
Now the family $\{  \tau^{\alpha}_{x}  \}_{\alpha \in J}$ is an element in the $\mathbb{R}$-vector space $\Pi \Omega^{0}(U_{\alpha_{0}})$ for each fixed  $x \in \mathcal{P}$. Thus, by evaluating the element  $\{  \tau^{\alpha}_{x}  \}_{\alpha \in J}$ in the diagram (\ref{1}), we get

\[ \xymatrix{
\{ d_{0}\tau^{\alpha}_{x}   \}_{\alpha \in J} \ar[r]	&	\delta_1 \{ d_0 \tau_x^\alpha \}_{\alpha \in J} = \{ 0 \}	\\
\{  \tau^{\alpha}_{x}  \}_{\alpha \in J} \ar[r] \ar[u]_{\ \ \ \ \ \ \ \ \ \ \ (\mathrm{I})}		&	\delta_0 \{\tau_x^\alpha \}_{\alpha \in J}	\ar[u] \ar[r]	&  \{ 0 \},
}
\]
where $\delta_1 \{ d_0 \tau_x^\alpha     \}_{\alpha \in J} = \{ 0 \}$ because
\[ (\delta_1 \{ d_0 \tau_x^\alpha     \}_{\alpha \in J})_{\alpha_0 \alpha_1} = d_{0}\tau^{\alpha_{1}}_{x} |_{U_{\alpha_{0} \alpha_{1}}} -  d_{0}\tau^{\alpha_{0}}_{x} |_{U_{\alpha_{0}\alpha_{1}}} = w_x |_{U_{\alpha_{0} \alpha_{1}}} -  w_x |_{U_{\alpha_{0} \alpha_{1}}} = 0.\]

We remember that the function $F^\alpha: U_\alpha \times \mathcal{P} \rightarrow \mathbb{R}$ given by $F^\alpha(m,x)=\tau_x^\alpha(m)$ is $\mathcal{C}^\infty$ for all $\alpha$ in $J$. For each fixed $x$ in $\mathcal{P}$ the function $F^\alpha(  -  , x): U_\alpha \rightarrow \mathbb{R}$ then belongs to  $\mathcal{C}^\infty(U_\alpha)$ for all $\alpha$ in $J$. Therefore, for each fixed $x$ in $\mathcal{P}$ the function $(\delta_0 \{   \tau^\alpha_x \}_{\alpha \in J}      )_{\alpha_0 \alpha_1}$ is in $\mathcal{C}^\infty(U_{\alpha_0 \alpha_1})$ for all $U_{\alpha_0 \alpha_1} \neq 0$ because
\begin{align*}
(\delta_0 \{   \tau^\alpha_x \}_{\alpha \in J}      )_{\alpha_0 \alpha_1} &= \tau^{\alpha_1}_x|_{U_{\alpha_0 \alpha_1}}- \tau^{\alpha_0}_x|_{U_{\alpha_0 \alpha_1}} \\
&= F^{\alpha_1}( - , x)|_{U_{\alpha_0 \alpha_1}}- F^{\alpha_0}( - , x)|_{U_{\alpha_0 \alpha_1}} \in \mathcal{C}^\infty(U_{\alpha_0 \alpha_1}).
\end{align*}

Since the above diagram commutes, however, we get $d_0((\delta_0 \{   \tau^\alpha_x \}_{\alpha \in J}      )_{\alpha_0 \alpha_1}) = 0$ on the open connected  $U_{\alpha_0 \alpha_1}$. On the other hand, for every smooth function from a connected manifold into another manifold such that the differential is zero, the map must be constant in the domain; see \cite{Warner}. Therefore, we have for each $x \in \mathcal{P}$ that the function $(\delta_0 \{   \tau^\alpha_x \}_{\alpha \in J}      )_{\alpha_0 \alpha_1}$ is a constant function that only depends on the indexes $\alpha_0$, $\alpha_1$ and the $x$ chosen.

For each $x$, we define a special family of  constants $ \{ C^{\alpha_0 \alpha}_x \}$, which smoothly depends on $x$.

We define for each $\alpha_0, \alpha_1 \in J$ such that $U_{\alpha_0 \alpha_1} \neq 0$ and for each fixed $x \in \mathcal{P}$ the constants in $\mathbb{R}$ given by 
\begin{equation}\label{A}
C_x^{\alpha_0 \alpha_1} = (\delta_0 \{   \tau^\alpha_x \}_{\alpha \in J}      )_{\alpha_0 \alpha_1} .
\end{equation}
We note that when we let the parameter $x \in \mathcal{P}$ vary we have that $C_x^{\alpha_0 \alpha_1} \in C^\infty(\mathcal{P})$
because the function $C_{(-)}^{\alpha_0 \alpha_1} = F^{\alpha_1}(*, - )-F^{\alpha_0}(*,- ): \mathcal{P} \rightarrow \mathbb{R}$ is $\mathcal{C}^\infty$, and it does not depend on the point $*$ chosen in $U_{\alpha_o \alpha_1}$.

\subsection*{Step 2. Extending the definition of the smooth constants}

Since $d_0 \{ \eta_x \vert_{U_\alpha}  \}_{\alpha \in J} = d_0 \{\tau_x^\alpha \}_{\alpha \in J}$ implies that $d_0( \tau_x^\alpha - \eta_x \vert_{U_\alpha} ) = 0$ for all $\alpha$ in $J$, then $\tau_x^\alpha - \eta_x \vert_{U_\alpha}$ is a constant for each fixed $x \in \mathcal{P}$. We define for each $\alpha \in J$ and for each fixed $x \in P$ the constant 
\[ \widetilde{C}_x^\alpha = \tau_x^\alpha - \eta_x \vert_{U_\alpha}.\]
Note that $\widetilde{C}_x^\alpha$ is not necessarily a smooth function when the parameter $x$ in $\mathcal{P}$ varies. Thus, we get 
\[ \tau_x^\alpha = \eta_x \vert_{U_\alpha} + \widetilde{C}_x^\alpha, \]
and then for all $\alpha_0$, $\alpha_1$ in $J$ such that $U_{\alpha_0 \alpha_1} \neq 0$, we have 
\begin{align}
\tau_x^{\alpha_1} \vert_{U_{\alpha_0 \alpha_1}} - \tau_x^{\alpha_0} \vert_{U_{\alpha_0 \alpha_1}} & = (\delta_0 \{ \tau_x^\alpha \}_{\alpha \in J})_{\alpha_0 \alpha_1} \nonumber \\
& = (\delta_0 \{  \eta_x \vert_{U_\alpha} + \widetilde{C}_x^\alpha    \}_{\alpha \in J})_{\alpha_0 \alpha_1} \nonumber\\
& =(\delta_0 \{  \widetilde{C}_x^\alpha   \}_{\alpha \in J})_{\alpha_0 \alpha_1} \nonumber \\
& = \widetilde{C}_x^{\alpha_1}  - \widetilde{C}_x^{\alpha_0}. \label{B}
\end{align}
From (\ref{A}) and (\ref{B}) we get that 
\[ C_x^{\alpha_0 \alpha_1} =  \widetilde{C}_x^{\alpha_1}  - \widetilde{C}_x^{\alpha_0} \]
for all $\alpha_0$, $\alpha_1$ in $J$ such that $U_{\alpha_0 \alpha_1} \neq 0$.

Now we will generalize the definition of the constants $C_x^{\alpha_0 \alpha_1}$ for any $\alpha_0$, $\alpha_1$ in $J$, even if $U_{\alpha_0 \alpha_1 }$ is empty. We define
\[ C_x^{\alpha_0 \alpha_1} =  \widetilde{C}_x^{\alpha_1}  - \widetilde{C}_x^{\alpha_0} \]
for all $\alpha_0$, $\alpha_1$ in $J$.

Next, we will check that $C_x^{\alpha_0 \alpha} \in \mathcal{C}^\infty(\mathcal{P})$ for all $\alpha_0$, $\alpha$ in $J$. We only need check on the case when $U_{\alpha_0 \alpha}$ is empty. We choose any point $m_0$ in $U_{\alpha_0}$. We take a path $\gamma : [0,1] \rightarrow M$ from $m_0$ to any fixed point $m_1$ in $U_{\alpha}$. Since $[0,1]$ is compact, we can choose a finite collection $U_{\alpha_0}, U_{\alpha_1}, \ldots, U_{\alpha_{k-1}}, U_{\alpha_k} = U_{\alpha}$ such that $U_{\alpha_i} \cap U_{\alpha_{i+1}} \neq \emptyset$ for each $i$. We will prove the case $k=2$, and the general case follows by induction.

We suppose that $U_{\alpha_0 \alpha_2}$ is empty. Since $U_{\alpha_0 \alpha_1} \neq \emptyset$ and $U_{\alpha_1 \alpha_2} \neq \emptyset$, we have that
$C_x^{\alpha_0 \alpha_1}$, $C_x^{\alpha_1 \alpha_2}$ are in $\mathcal{C}^\infty(\mathcal{P})$, so is their sum. On the other hand,
\[ C_x^{\alpha_0 \alpha_1} + C_x^{\alpha_1 \alpha_2} = (\widetilde{C}_x^{\alpha_1}  - \widetilde{C}_x^{\alpha_0}) + (\widetilde{C}_x^{\alpha_2}  - \widetilde{C}_x^{\alpha_1}) = \widetilde{C}_x^{\alpha_2} - \widetilde{C}_x^{\alpha_0} = C_x^{\alpha_0 \alpha_2}. \]
Therefore, $C_x^{\alpha_0 \alpha_2}$ is in $\mathcal{C}^\infty(\mathcal{P})$.

Thus, we have that $C_x^{\alpha_i \alpha_j}$ is defined for each pair $\alpha_i$, $\alpha_j$ in $J$ as $\widetilde{C}_x^{\alpha_j} - \widetilde{C}_x^{\alpha_i}$ and $C_x^{\alpha_i \alpha_j} \in \mathcal{C}^\infty(\mathcal{P})$.

\subsection*{Step 3. Redefining the family of 0-forms}

By adding the family of constants $\{ C^{\alpha_0 \alpha}_x \}$ that we got in the last step to the family of $0$-forms $\{ \tau^\alpha_x \}$ which we got in Step 1, we define a new family $\{ \widetilde{\tau}^\alpha_x \}$, which has the same properties as the family $\{ \tau^\alpha_x \}$ except that now the $\delta$ operator applied to this new family is zero. We proceed as follows.

We choose any fixed $\alpha_0 \in J$, and we define $\widetilde{\tau}^\alpha_x = \tau^\alpha_x - C^{ \alpha_0 \alpha}_x$ for each $\alpha$ in $J$ and for each $x$ in $\mathcal{P}$. Thus, we have $\widetilde{\tau}^\alpha_x \in \Omega^0(U_\alpha)$ and $d_0 \widetilde{\tau}^\alpha_x = d_0(\tau^\alpha_x - C^{\alpha_0 \alpha}_x) = d_0(\tau^\alpha_x) - d_0(C^{\alpha_0 \alpha}_x) = d_0(\tau^\alpha_x) - 0 = \omega_x|_{U\alpha}$
for each fixed $x$ in $\mathcal{P}$ and for all $\alpha$ in $J$. In addition, the new family $\{ \widetilde{\tau}^\alpha_x \}_{x \in \mathcal{P}}$ is smoothly depending on $x$ in $\mathcal{P}$ like the family $\{ \tau^\alpha_x \}_{x \in \mathcal{P}}$. Now, however, the family $\{ \widetilde{\tau}^\alpha_x \}_{x \in \mathcal{P}}$ has the additional property that $\delta_0 \{  \widetilde{\tau}^\alpha_x   \}_{x \in \mathcal{P}}=\{ 0 \}$ because
\begin{align*}
\delta_0(\{ \widetilde{\tau}_x^\alpha \}_{\alpha \in J})_{\alpha_i \alpha_j} &= \widetilde{\tau}_x^{\alpha_j} \vert_{U_{\alpha_i \alpha_j}} - \widetilde{\tau}_x^{\alpha_i} \vert_{U_{\alpha_i \alpha_j}} \\
&= (\tau_x^{\alpha_j} \vert_{U_{\alpha_i \alpha_j}}  - C_x^{\alpha_0 \alpha_j}) - (\tau_x^{\alpha_i} \vert_{U_{\alpha_i \alpha_j}}  - C_x^{\alpha_0 \alpha_i}) \\
& =(\tau_x^{\alpha_j} \vert_{U_{\alpha_i \alpha_j}} - \tau_x^{\alpha_i} \vert_{U_{\alpha_i \alpha_j}}) - (C_x^{\alpha_0 \alpha_j} - C_x^{\alpha_0 \alpha_i}) \\
& =(\widetilde{C}_x^{\alpha_j} - \widetilde{C}_x^{\alpha_i}) - (C_x^{\alpha_0 \alpha_j} - C_x^{\alpha_0 \alpha_i}) \mbox{ by (\ref{B}).} \\
& =(\widetilde{C}_x^{\alpha_j} - \widetilde{C}_x^{\alpha_i}) - ((\widetilde{C}_x^{\alpha_j} - \widetilde{C}_x^{\alpha_0}) - (\widetilde{C}_x^{\alpha_i} - \widetilde{C}_x^{\alpha_0})) \\
& =0.
\end{align*}

\subsection*{Step 4. Diagram chasing in the \v{C}ech-de Rham complex}

Since the rows in diagram (\ref{1}) are exact, then the collection $\{ \widetilde{\tau}^\alpha_x \}_{x \in \mathcal{P}}$ determines a collection of global $0$-forms $\{  \tau_x  \}_{\ x \in \mathcal{P}}$ in $\Omega^0(M)$. Such a collection is smoothly dependent on the parameter $x$ in $\mathcal{P}$ and $d_0 \tau_x = \omega_x$ for each $x$ in $\mathcal{P}$. 

In summary for case $k=1$, we define the smooth family of primitives $\tau_x$ by 
\[ \tau_x \vert_{U_\alpha} = \tau_x^\alpha - A_x^\alpha + A_x^{\alpha_{0}} \] 
for each $\alpha \in J$, where the constants $A_x^\alpha$ are defined by 
\[ A_x^\alpha = \tau_x^\alpha -\eta_x \vert_{U_\alpha}\]
for each $\alpha \in J$ and $\alpha_0$ is any fixed element in $J$.

\subsection{Constructing a smooth family of exact \textit{k}-forms}
For any $k \geq 2$, we now construct a smooth family of $k$-forms $\{ \tau_{x}\}_{x \in \mathcal{P}}$ on a manifold $M$ such that $d \tau_{x} = \omega_{x}$ for each $x \in \mathcal{P}$, for a given family $\{ \omega_x\}_{x \in \mathcal{P}}$ of exact $(k+1)$-forms, which smoothly depends on a parameter $x$ in an open set $\mathcal{P} \subseteq \mathbb{R}^n$. 

For each $\alpha$ in $J$, let $\{\tau^{\alpha}_{x}\}_{x \in \mathcal{P}}$ be the collection of $k$-forms defined in Section \ref{Section:local.case}, and for each collection of smooth vector fields $X_1, \dots, X_k$ on $U_{\alpha}$, let $F^\alpha_{X_1, \dots, X_k} : U_\alpha \times \mathcal{P} \rightarrow \mathbb{R}$ be the $\mathcal{C}^{\infty}$-function defined by $F^\alpha_{X_1, \dots, X_k}(m,x) = ( \tau^{\alpha}_{x}(X_1, \dots, X_k))(m)$.

\subsection*{Step 1. Defining an element $\{ a_x^\alpha \}_{\alpha \in J}$ in $\Pi\Omega^{k-1}(U_\alpha)$ via a diagram chasing in the \v{C}ech-de Rham complex}

Associated with the good cover $\{ U_\alpha \}_{\alpha \in J}$ of the smooth manifold $M$, we have a \v{C}ech-de Rham complex \cite{Bott}, which implies that we have the following commutative diagram with exact horizontal and vertical arrows:
\begin{gather}
\begin{aligned}
\xymatrix{
			& \Omega^{k+1}(M)\ar[r]^-{\delta_{k+1}}	
			& \Pi \Omega^{k+1}(U_{\alpha_0}) \ar[r]^-{\delta_{k+1}}	
			& \Pi \Omega^{k+1}(U_{\alpha_{0} \alpha_1}) 
			\\
\ 0 \ar[r]	& \Omega^k(M) \ar[r]^-{r}\ar[u]^-{d_k}_{ \ \ \ \ \ \ \ \ (\mathrm{I})}
			& \Pi \Omega^k(U_{\alpha_0}) \ar[r]^-{\delta_k} \ar[u]^-{d_k}_{ \ \ \ \ \ \ \ \ \ \ (\mathrm{II})} 
			& \Pi \Omega^k(U_{\alpha_0 \alpha_1}) \ar[u]^-{d_k} 
			\\
			&
			& \Pi\Omega^{k-1}(U_{\alpha_0}) \ar[r]^-{\delta_{k-1}} \ar[u]^-{d_{k-1}}_{ \ \ \ \ \ \ \ \ \ \ (\mathrm{III})}
			& \Pi \Omega^{k-1}(U_{\alpha_{0} \alpha_1}) \ar[r]^-{\delta_{k-1}} \ar[u]^-{d_{k-1}}
			& \Pi\Omega^{k-1}(U_{\alpha_{0} \alpha_1 \alpha_2 })
}
\end{aligned}
\label{2}
\end{gather}

By hypothesis, there exists a family $\{\eta_{x}\}_{x \in \mathcal{P}}$ of $k$-forms on $M$  with the property $d \eta_{x} = \omega_{x}$ for each $x$ in $\mathcal{P}$. For each $x \in \mathcal{P}$ the element $\{ \tau_x^\alpha - \eta_x |_{U_\alpha} \}_{\alpha \in J}$ then belongs to $\Pi \Omega^k(U_\alpha)$. This element is not necessarily smoothly dependent on $x$. In addition, since $d \eta_{x} = \omega_{x}$ we get that $\delta_k ( \{ \eta_x |_{U_\alpha} \}_{\alpha \in J}) = \{ 0 \}$ for each $x$ in $\mathcal{P}$.
Thus, 
\[ \delta_k ( \{ \tau^{\alpha}_{x} - \eta_x |_{U_\alpha} \}) = \delta_k ( \{ \tau^{\alpha}_{x} \}) - \delta_k ( \{ \eta_x |_{U_\alpha} \}) = \delta_k ( \{ \tau^{\alpha}_{x} \}) - \{0 \} = \delta_k ( \{ \tau^{\alpha}_{x} \}). \]
Therefore, by evaluating the element $\{ \tau_x^\alpha - \eta_x |_{U_\alpha} \}_{\alpha \in J}$ in diagram (\ref{2}), we get the following commutative diagram
\[\xymatrix{
\{ 0 \}	\\
\{ \tau^{\alpha}_{x} - \eta_x |_{U_\alpha} \} \ar[r]^-{\delta_k} \ar[u]^-{d_k}	&	\delta_k ( \{ \tau^{\alpha}_{x} \})	\\
\{a_x^\alpha \} \ar@{-->}[u]^-{d_{k-1}}_{ \ \ \ \ \ \ \ \ (\mathrm{III})}\ar[r]^-{\delta_{k-1}}	&	\delta_{k-1} \{ a_x^\alpha \}\ar[u]^-{d_{k-1}}	.}
\]

Indeed, since vertical arrows are exact in diagram (\ref{2}) (by the Poincar\'e lemma), there exists an element $\{a_x^\alpha \}_{\alpha \in J}$ in $\Pi \Omega^{k-1}(U_\alpha)$, which is not necessarily smoothly dependent on $x$ such that for each $x \in \mathcal{P} $ we have that
\[ d_{k-1}(\{a_x^\alpha \}_{\alpha \in J}) = \{ \tau^{\alpha}_{x} - \eta_x |_{U_\alpha} \}_{\alpha \in J}. \]
Thus, 
\[ \delta_k d_{k-1}(\{a_x^\alpha \}_{\alpha \in J}) = \delta_k \{ \tau^{\alpha}_{x}  \}_{\alpha \in J}. \]
In addition, since square (III) in diagram (\ref{2}) commutes, we get
\[ d_{k-1} \delta_{k-1}(\{a_x^\alpha \}_{\alpha \in J}) = \delta_k \{ \tau^{\alpha}_{x}  \}_{\alpha \in J}. \]

\subsection*{Step 2. Defining a smooth element $\{  g_x^{\alpha_0 \alpha_1}  \}_{\alpha_0 \alpha_1 \in J}$ in $\Pi\Omega^{k-1}(U_{\alpha_0 \alpha_1})$ depending on $\{ a_x^\alpha \}_{\alpha \in J}$} 

We define an element $\{ g_x^{\alpha_0 \alpha_1} \}_{\alpha_0, \alpha_1 \in J}$ in $\Pi \Omega^{k-1}(U_{\alpha_0 \alpha_1})$
by setting 
\[ g_x^{\alpha_0 \alpha_1} = \delta_{k-1}(\{a_x^\alpha \}_{\alpha \in J}) \]
for each $\alpha_0$, $\alpha_1$ in $J$. We will proceed to prove that for each fixed $\alpha_0$, $\alpha_1$  in $J$ the family $\{ g_x^{\alpha_0 \alpha_1} \}_{x \in \mathcal{P}}$ varies smoothly with respect to the parameter $x$ in $\mathcal{P}$.

Explicitly, we have for each $\alpha$ in $J$ that 
\[ a_x^\alpha =  \int_0^1 \dfrac{1}{t} \lambda_t^* \iota_I (\tau_x^\alpha -\eta_x \vert_{U_\alpha}) dt = \int_0^1 \dfrac{1}{t} \lambda_t^* \iota_I (\tau_x^\alpha) - \int_0^1 \dfrac{1}{t} \lambda_t^* \iota_I (\eta_x \vert_{U_\alpha}).\]
Therefore,
\begin{align*}
g_x^{\alpha_0 \alpha_1} &= (\delta_{k-1} \{ a_x^\alpha  \})_{\alpha_0 \alpha_1} \\
& = a_x^{\alpha_1} \vert_{U_{\alpha_0 \alpha_1}} - a_x^{\alpha_0} \vert_{U_{\alpha_0 \alpha_1}} \\
& = \Big( \int_0^1 \dfrac{1}{t} \lambda_t^* \iota_I (\tau_x^{\alpha_1}) - \int_0^1 \dfrac{1}{t} \lambda_t^* \iota_I (\eta_x \vert_{ U_{\alpha_1}  }) \Big) \vert U_{\alpha_0 \alpha_1} - \Big(\int_0^1 \dfrac{1}{t} \lambda_t^* \iota_I (\tau_x^{\alpha_0}) - \int_0^1 \dfrac{1}{t} \lambda_t^* \iota_I (\eta_x \vert_{ U_{\alpha_0}  }) \Big) \vert U_{\alpha_0 \alpha_1} \\
& = \int_0^1 \dfrac{1}{t} \lambda_t^* \iota_I (\tau_x^{\alpha_1} \vert U_{\alpha_0 \alpha_1} - \tau_x^{\alpha_0} \vert U_{\alpha_0 \alpha_1}),
\end{align*}
where the last expression smoothly depends on $x$. Therefore, we obtain that $\{  g_x^{\alpha_0 \alpha_1}\}_{x \in \mathcal{P}}$ varies smoothly with respect to the parameter $x$ in $\mathcal{P}$ for each fixed $\alpha_0$, $\alpha_1$ in $J$.  

\subsection*{Step 3. Defining a smooth element $\{ G_x^\alpha \}_{\alpha \in J}$ in $\Pi\Omega^{k-1}(U_\alpha)$ on terms of the element $\{ g_x^{\alpha_0 \alpha_1}  \}_{\alpha_0 \alpha_1 \in J}$}

Since 
\[ \delta_{k-1} \{ g_x^{\alpha_0 \alpha_1 } \}_{\alpha_0, \alpha_1 \in J} = \delta_{k-1} \delta_{k-1} \{ a_x^\alpha \}_{\alpha \in J} = \{ 0 \}, \]
then the element $\{ g_x^{\alpha_0 \alpha_1 } \}_{\alpha_0, \alpha_1 \in J}$ is a cocycle on $\Pi\Omega^{k-1}(U_{\alpha_0 \alpha_1})$. Furthermore, because of the exactness of the \v{C}ech-de Rham complex, there exists an element $\{ G_x^\alpha \}_{\alpha \in J}$ in $\Pi \Omega^{k-1}(U_\alpha)$ such that
\[ \delta_{k-1} \{ G_x^\alpha \}_{\alpha \in J} = \{ g_x^{\alpha_0 \alpha_1} \}_{\alpha_0, \alpha_1 \in J}, \]
as is shown on the following diagram:
\begin{gather}
\begin{aligned}
\xymatrix{
\{ 0 \}	\\
\{ \tau^{\alpha}_{x} - \eta_x |_{U_\alpha} \} \ar[r]^-{\delta_k} \ar[u]^-{d_k}	&	\delta_k ( \{ \tau^{\alpha}_{x} \})		\\
\{G_x^\alpha \} \ar@{-->}[r]^-{\delta_{k-1}}	&	\delta_{k-1} \{ a_x^\alpha \}\ar[u]^-{(\mathrm{III}) \ \ \ \ \ \ \ \ \ \ \ \ }^{d_{k-1}}\ar[r]^-{\delta_{k-1}}	&	\{0\}.	}
\end{aligned}
\label{D}
\end{gather}

Moreover, for each $\alpha$ in $J$ the family $\{ G_x^\alpha \}_{\alpha \in J}$ can be chosen such that for each $\alpha$ in $J$ the family  $\{ G_x^\alpha \}_{x \in \mathcal{P}}$ varies smoothly on $x$. This is a consequence of this family being defined by using a partition of the unity and by using sums of elements $g_x^{\beta\alpha}$ in $\Omega^{k-1}(U_{\beta \alpha})$. In fact for each $\alpha$ in $J$ we can define
\[ G_x^\alpha = \sum_{\beta \in J} \rho_{\beta}g_x^{\beta \alpha}, \]
where $\{ \rho_\beta: M \rightarrow \mathbb{R} \}_{\beta \in J}$ is a partition of unity subordinate to the good cover $\{ U_\alpha \}_{\alpha \in J}$ of $M$ as in the proof of exactness of the generalized Mayer-Vietoris  sequence in \cite{Bott}. In order to check explicitly that the family $\{ G_x^\alpha \}_{x \in \mathcal{P}}$ varies smoothly on $x$, let $X_1$, $\dots$, $X_{k-1}$ be smooth vector fields on $U_{\alpha}$. Thus, the function
\[ F^\alpha_{X_1, \dots X_{k-1}}: U_\alpha \times \mathcal{P} \rightarrow \mathbb{R} \]
defined as
\[ F^\alpha_{X_1, \dots X_{k-1}}(m,x) = (G_x^\alpha)_m((X_1)_m, \dots, (X_{k-1})_m ) \]
is such that
\begin{align}
F^\alpha_{X_1, \dots X_{k-1}}(m,x) & = (G_x^\alpha)_m((X_1)_m, \dots, (X_{k-1})_m ) \nonumber \\
& =(\sum_{\beta \in J} \rho_{\beta}g_x^{\beta \alpha})_m((X_1)_m, \dots, (X_{k-1})_m) \nonumber \\
& =\sum_{\beta \in J} \rho_{\beta}(m) (g_x^{\beta \alpha})_m((X_1)_m, \dots, (X_{k-1})_m) \nonumber \\
& =\sum_{\beta \in J} \rho_{\beta}(m) (g_x^{\beta \alpha})_m ((X_1 \vert_ {U_{\beta \alpha}})_m, \dots, (X_{k-1} \vert_{U_{\beta \alpha}})_m) \nonumber \\
& =\sum_{\beta \in J} \rho_{\beta}(m) F^{\beta \alpha}_{X_1 \vert_ {U_{\beta \alpha}}, \dots, X_{k-1} \vert_{U_{\beta \alpha}}}(m,x) \label{3}
\end{align}
where the functions
\[ F^{\beta \alpha}_{X_1 \vert_ {U_{\beta \alpha}}, \dots, X_{k-1} \vert_{U_{\beta \alpha}}} : U^{\beta \alpha} \times \mathcal{P} \rightarrow \mathbb{R} \]
defined for each $\beta$ as
\[ F^{\beta \alpha}_{X_1 \vert_ {U_{\beta \alpha}}, \dots, X_{k-1} \vert_{U_{\beta \alpha}}}(m,x) = (g_x^{\beta \alpha})_m( (X_1 \vert_ {U_{\beta \alpha}})_m, \dots, (X_{k-1} \vert_{U_{\beta \alpha}})_m   ) \]
are $\mathcal{C}^\infty$ because for each $\beta$ the family $\{  g_x^{\beta \alpha}\}_{x \in \mathcal{P}}$ is smooth on $x$. Since the  sum (\ref{3}) is finite, we get that the function $F^\alpha_{X_1, \dots, X_{k-1} }$ is $\mathcal{C}^\infty$, so the family $\{ G_x^\alpha \}_{x\in \mathcal{P}}$ varies smoothly on $x$. 

\subsection*{Step 4. Diagram chasing in the \v{C}ech-de Rham complex}

From diagram (\ref{D}) and since the square $(\mathrm{III})$ in diagram (\ref{2}) commutes, we have that 
\[ \delta_k( \{ d_{k-1} G_x^{\alpha}  \}_{\alpha \in J}  ) = \delta_k( \{ \tau_x^\alpha \}_{\alpha \in J} ), \]
where $\{ d_{k-1} G_x^{\alpha}  \}_{\alpha \in J}$ is in $\Pi \Omega^k(U_{\alpha})$.

By using the element $\{ \tau_x^\alpha \}_{\alpha \in J}$ in $\Pi \Omega^k(U_{\alpha})$, we define a new element $\{ \widetilde{\tau}_x^\alpha \}_{\alpha \in J}$ in $\Pi \Omega^k(U_{\alpha})$ by setting
\[ \widetilde{\tau}_x^\alpha = \tau_x^\alpha - d_{k-1} G_x^\alpha \]
for each $\alpha$ in $J$. Therefore, the element $\{ \widetilde{\tau}_x^\alpha \}_{\alpha \in J}$ is such that 
\[ \delta_k( \{ \widetilde{\tau}_x^\alpha \}_{\alpha \in J} ) = \{  0 \} \]
and
\[ d_k \{ \widetilde{\tau}_x^\alpha \}_{\alpha \in J} = d_k \{ \tau_x^\alpha \}_{\alpha \in J}. \]
Therefore, we obtain the following commutative diagram:
\[ \xymatrix{
	&	\{ d \tau^{\alpha}_{x}  \} \ar[r]^-{\delta_{k+1}}	&	\delta_{k+1} ( \{ d \tau^{\alpha}_{x}  \} ) = \{ 0 \}	\\
\{ \tau_{x} \}\ar@{-->}[r]^-{r}	&	\{ \widetilde{\tau}^{\alpha}_{x} \} \ar[r]^-{\delta_k} \ar[u]^-{d_k}_{ \ \ \ \ \ \ \ \ \ \ (\mathrm{I})}	&	\{ 0 \} \ar[u]^-{d_k} \ar[r]^-{\delta_k}	&	\{ 0 \}} 
\]

Since the bottom row is exact, there then exists a family $\{ \tau_x \}_{x \in \mathcal{P} }$ in $\Pi \Omega^1(M)$ that smoothly depends on $x$ and such that
\[ r ( \{ \tau_x \}_{x \in \mathcal{P} } ) = \{ \widetilde{\tau}_x^\alpha \}_{\alpha \in J}. \]
But $r : \Pi \Omega^k(M) \rightarrow  \Pi \Omega^1(U_{\alpha})$ in the generalized Mayer-Vietoris sequence is by definition the restriction  \cite{Bott}. Since square (I) from diagram (\ref{2}) commutes, we get

\begin{gather}
\begin{aligned}
\xymatrix{
d_k \{ \tau_x \}_{x \in \mathcal{P} } \ar[r]^-{r}	&	\{ d \tau^{\alpha}_{x} = \omega_x |_{U_\alpha}  \} \ar[r]^-{\delta_{k+1}}	&	\{ 0 \}	\\
\{ \tau_x \}_{x \in \mathcal{P} } \ar[r]^-{r} \ar[u]^-{d_k}_{ \ \ \ \ \ \ \ \ \ \ \ (\mathrm{I})}	&	\{ \widetilde{\tau}^{\alpha}_{x} \} \ar[r]^{\delta_k} \ar[u]^{d_k} &	\{ 0 \}	}
\end{aligned}
\label{5}
\end{gather}

Diagram (\ref{5}) means that 
\[ r ( \{d_k \tau_x \}_{x \in \mathcal{P}}  ) = \{ \omega_x |_{U_\alpha} \}_{\alpha \in J}, \]
however, or equivalently
\[ ( d_k \tau_x) |_{U_\alpha} = \omega_x |_{U_\alpha} \]
for each $\alpha$ in $J$. Since $\{ U_\alpha \}_{\alpha \in J}$ is a cover for $M$, we have that
\[ d_k \tau_x  = \omega_x \]
for each $x$ in $\mathcal{P}$. Therefore, $\{\tau_x \}_{x \in \mathcal{P}}$ is the required family stated in the theorem. 
\bigskip

In summary, for the case $k \geq 2$ we define the family of smooth primitives $\tau_x$ by
\[ \tau_x \vert_{U_\alpha} = \tau_x^\alpha - d_{k-1}( \ \sum_{\beta \in J}\rho_\beta( a_x^\alpha \vert_{U_{\beta \alpha}} -  a_x^\beta \vert_{U_{\beta \alpha}}   ) ), \]
for each $\alpha \in J$, where $a_x^\alpha$ is the element in $\Omega^{k-1}(U_\alpha)$ defined by
\[ a_x^\alpha =  \int_0^1 \dfrac{1}{t} \lambda_t^* \iota_I (\tau_x^\alpha -\eta_x \vert_{U_\alpha}) dt \]
and where $\{ \rho_\beta: M \rightarrow \mathbb{R} \}_{\beta \in J}$ is a partition of unity subordinate to the good cover $\{ U_\alpha \}_{\alpha \in J}$ of $M$.

\section{Applications}\label{Section:application}
We concluded with an application in proving the following theorem.

\begin{theorem}\label{beta3}
Let $(M,\mathcal{F})$ be a foliated manifold such that the foliation $\mathcal{F}$ is given by a locally trivial submersion $p: M \to B$. If $H^r_{DR}(p^{-1}(b))=0$ for all $b \in B$ and an integer $r$, then $H^r(M, \phi^0(\mathcal{F}))=0$. 
\end{theorem}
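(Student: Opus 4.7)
The plan is to translate the sheaf-cohomological statement into a question about leafwise de Rham cohomology, then apply Theorem~\ref{AAA} fiberwise and patch the resulting local primitives with a partition of unity on $B$. The leafwise Poincar\'e lemma, which holds because $\mathcal{F}$ is given by a submersion (so every leaf is locally diffeomorphic to an open subset of a Euclidean space), provides a fine resolution
\[ 0 \to \phi^0(\mathcal{F}) \to \Omega^0(\mathcal{F}) \to \Omega^1(\mathcal{F}) \to \cdots \]
of $\phi^0(\mathcal{F})$ by the sheaves $\Omega^q(\mathcal{F})$ of leafwise (tangential) $q$-forms on $M$; each $\Omega^q(\mathcal{F})$ is a $C^\infty_M$-module and hence fine, so $H^r(M,\phi^0(\mathcal{F}))$ is computed by the complex of global sections. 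It therefore suffices to show that every leafwise closed $r$-form $\omega$ on $M$ admits a global leafwise primitive.

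To produce local primitives, I would cover $B$ by open sets $V_i$, each diffeomorphic to an open $\mathcal{P}_i \subseteq \mathbb{R}^{\dim B}$, such that $p$ is trivial over $V_i$ with $p^{-1}(V_i) \cong V_i \times F_i$. Under this trivialization, the restriction of $\omega$ to $p^{-1}(V_i)$ corresponds to a smooth family $\{\omega_b\}_{b\in\mathcal{P}_i}$ of closed $r$-forms on $F_i$; by the hypothesis $H^r_{DR}(p^{-1}(b)) = 0$, each $\omega_b$ is exact. Theorem~\ref{AAA} then yields a smooth family of primitives $\{\tau^i_b\}_{b\in\mathcal{P}_i}$, which packages into a leafwise $(r-1)$-form $\tau^i$ on $p^{-1}(V_i)$ satisfying $d_{\mathcal{F}}\tau^i = \omega|_{p^{-1}(V_i)}$.

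To glue these local primitives, I would pull back a partition of unity $\{\rho_i\}$ on $B$ subordinate to $\{V_i\}$ via $p$ and set
\[ \sigma^i := \sum_j (\rho_j \circ p)\,(\tau^i - \tau^j) \qquad \text{on } p^{-1}(V_i), \]
extending each summand by zero outside $p^{-1}(V_i \cap V_j)$. Since each $\rho_j \circ p$ is basic and each $\tau^i - \tau^j$ is leafwise closed on the overlap, $\sigma^i$ is leafwise closed; the identity $\sum_j \rho_j \equiv 1$ yields $\sigma^i - \sigma^j = \tau^i - \tau^j$ on overlaps, so $\tau|_{p^{-1}(V_i)} := \tau^i - \sigma^i$ defines a global leafwise $(r-1)$-form on $M$ with $d_{\mathcal{F}}\tau = \omega$. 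The main obstacle I foresee is the middle step: one must carefully unpack the identification between leafwise $r$-forms on $V_i \times F_i$ and smooth families (in the sense of Theorem~\ref{AAA}) of $r$-forms on $F_i$ parameterized by $V_i$, and confirm that the pointwise vanishing hypothesis for $H^r_{DR}(p^{-1}(b))$ is exactly what is required to invoke Theorem~\ref{AAA} on each chart.
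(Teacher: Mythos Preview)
Your proposal is correct and follows essentially the same route as the paper: reduce $H^r(M,\phi^0(\mathcal{F}))$ to leafwise de Rham cohomology, trivialize over a cover of $B$, invoke Theorem~\ref{AAA} on each chart to obtain local leafwise primitives, and then patch using a partition of unity pulled back from $B$ (exploiting that basic functions have vanishing leafwise differential). Your \v{C}ech-style correction $\sigma^i$ is a slightly cleaner way to organize the gluing than the paper's direct overlap verification, but the substance is the same.
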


\begin{proof} 
Since $M \xrightarrow{p} B$ is a locally trivial bundle, then there exists a countable good cover $\{U_{\alpha} \}_{\alpha \in J}$ of $B$ such that for each $U$ in  $\{U_{\alpha} \}_{\alpha \in J}$ there exists a homeomorphism $\varphi_{U}$ such that the following diagram commutes:
\[ \xymatrix{ p^{-1}(U) \ar[r]^-{\varphi_U} \ar[d]_-{p} & U \times F \ar[d]^-{\pi_1} \\
			U\ar[r]^-{\mathrm{Id}}	& 	U.
			}
\]

Moreover, for each $U$ and for each $b$ in $U$, we have the composition 
\[ \xymatrix{ b \times F \ar[r]^-{\varphi_U \vert_b} & p^{-1}(b) \ar[r]^-{i_b} & p^{-1}(U),} \]
where $i_b$ is the inclusion, and we have the induced composition
\[ \xymatrix{ \Omega(b \times F) \ar[r]^-{(\varphi_u \vert_b)^*} & \Omega(p^{-1}(b)) \ar[r]^-{(i_b)^*} & \Omega(p^{-1}(U)).} \]

Let $[w]$ be an element in $\Omega^r(\mathcal{F})$ such that $d_{\mathcal{F}} [w] =0$. To prove the theorem, we need to prove that there exists an element $[\tau]$ in $\Omega^{r-1}(\mathcal{F})$ such that $d_{\mathcal{F}} [\tau] = [w]$; however, this claim is equivalent to the following statements:
\begin{itemize}
\item[(\textit{i})] $d \tau - w$ belongs to $\Omega^r(M)$.
\item[(\textit{ii})] $(d \tau- w)_m(u_1, \dots, u_r) =0$ for each $b \in B$, for all  $m \in p^{-1}(b)$ and for all $u_1, \dots, u_r \in T_m p^{-1}(b)$. 
\item[(\textit{iii})] $i_b^*(  (d \tau -w)\vert_{p^{-1}(U)}  )  = 0$  for each $b \in U$ and for each $U$ in the good cover.
\item[(\textit{iv})] For each $b \in U$ and for each $U$ in the good cover, 
\begin{equation}\label{beta2}
i_b^*(d \tau \vert_{p^{-1}(U)}) = i_b^*(w \vert_{p^{-1}(U)}).
\end{equation}
\end{itemize}
We will prove (\textit{iv}). By hypothesis we have that $d_{\mathcal{F}}[w] = 0$, then $i_b^*(w \vert_{p^{-1}(U)})$ is closed for each $b \in U$ and for each $U$ in the good cover. On the other hand, also by hypothesis all closed forms in $\Omega(p^{-1}(b))$ are exact for each $b \in B$. This happens, however, if and only if all closed forms in $\Omega(b \times F)$ are exact for each  $b \in U$ and all $U$ in the good cover; this is a consequence of the isomorphism $(\varphi\vert_U)^* : \Omega(p^{-1}(b)) \to \Omega(b \times F)$, commutes with $d$.

Therefore, we get that $(\varphi_U \vert_b)^*i_b^*(w \vert_{p^{-1}(U)}) \in \Omega(b \times F)$ is closed for each $b \in U$ and all $U$ in the good cover. Thus, $(\varphi_U \vert_b)^*i_b^*(w \vert_{p^{-1}(U)}) \in \Omega(b \times F)$ is exact for each $b \in U$ and all $U$ in the good cover.

Since for each fixed $U$ in the good cover the family
\[ \{ (\varphi_U \vert_b)^*i_b^*(w \vert_{p^{-1}(U)})  \}_{b \in U} \]
is a smooth family of exact $(r-1)$-forms on $F$ depending on the parameter $b$, we have by Theorem \ref{AAA} that there exists a smooth family 
\[ \{ \tau_b \}_{b\in U}\]
of $(r-1)$-forms on $F$ depending on the parameter $b$ such that 
\[ d\tau_b = (\varphi_U \vert_b)^*i_b^*(w \vert_{p^{-1}(U)})\]
for each $b \in U$.

On the other hand, we have the following commutative diagram:
\[ \xymatrix{ 
B \times F																		&				\\
U \times F\ar[u] \ar[r]^-{\varphi_U} 											&	p^{-1}(U)	\\
b \times F \ar[u]_-{\widetilde{i}_b} \ar[r]^-{\varphi_U \vert_{b \times F}}	&	p^{-1}(b) \ar[u]^-{i_b},
} \]
where $\widetilde{i}_b$ is the inclusion. Such a diagram induces the following commutative diagram:
\begin{gather}
\begin{aligned}
\xymatrix{
\Omega(B \times F) \ar[d]							&																	\\
\Omega(U \times F) \ar[d]_-{\widetilde{i}_b^*}	&	\Omega(p^{-1}(U)) \ar[l]_-{\varphi_U^*} \ar[d]^-{i_b^*}		\\
\Omega(b \times F)									&	\Omega(p^{-1}(b)) \ar[l]^-{(\varphi_U \vert_{b \times F})^*}.
}
\end{aligned}
\label{alfa}
\end{gather}

Let $\{ \rho_\alpha : M\rightarrow \mathbb{R}   \}_{\alpha \in J}$ be a partition of unity subordinated to the good cover $\{  U_{\alpha}  \}_{\alpha \in J}$ of $B$. We define $\tau \in \Omega^{r-1}(B \times F)$ as follows: for each $(b,f) \in B \times F$ and each $u_1, \dots, u_{r-1} \in T_{(b,f)}(B \times F)$ we set 
\[ \tau_{(b,f)}(u_1, \dots, u_{r-1}) = \sum_{\alpha \in J} \rho_\alpha(b) (\tau_b)_{(b,f)}( d\pi_1 u_1, \dots, d\pi_1 u_{r-1}). \]

Now we will prove first that $\widetilde{i_b}^*(\tau \vert_{U \times F}) \in \Omega^{r-1}(b \times F)$ is such that
\[ d \widetilde{i_b}^*(\tau \vert_{U \times F}) = (\varphi_U \vert_{b \times F})^* i_b^*(w \vert_{p^{-1}(U)}) \]
for each $U$ in the good cover and for each $b \in U$.

For each $U \in \{ U_\alpha \}_{\alpha \in J}$, $b\in U$ and $u_1, \dots, u_{r-1} \in T_{(b,f)}(b \times F)$, we get
\begin{align*}
(d \widetilde{i_b}^*(\tau \vert_{U \times F}))_{(b,f)}(u_1, \dots, u_{r-1}) & = d \tau_{\widetilde{i}_b (b,f)}(d \widetilde{i}_b (u_1), \dots, d \widetilde{i}_b (u_{r-1}) ) \\
& = d \sum_{\alpha \in J} \rho_\alpha(b)(\tau_b)_{(b,f)} (d \pi_1 d \widetilde{i}_b (u_1), \dots, d\pi_1 d \widetilde{i}_b (u_{r-1}) ) \\
& = \sum_{\alpha \in J} \rho_\alpha(b) d (\tau_b)_{(b,f)}(u_1, \dots, u_{r-1} ) \\
& = \sum_{\alpha \in J} \rho_\alpha(b) ( (\varphi_U \vert_{b \times F})^* i_b^*(w \vert_{p^{-1}(U)})  )_{(b,f)}(u_1, \dots, u_{r-1} ).
\end{align*}

Therefore, for each $(b,f) \in b \times F$ we have that
\begin{align*}
(d \widetilde{i_b}^*(\tau \vert_{U \times F}))_{(b,f)} & = \sum_{\alpha \in J} \rho_\alpha(b) ( (\varphi_U \vert_{b \times F})^* i_b^*(w \vert_{p^{-1}(U)})  )_{(b,f)} \\
& = 1 \cdot ( (\varphi_U \vert_{b \times F})^* i_b^*(w \vert_{p^{-1}(U)})  )_{(b,f)}.
\end{align*}

That means
\begin{equation}\label{alfa1}
d \widetilde{i_b}^*(\tau \vert_{U \times F}) = (\varphi_U \vert_{b \times F})^* i_b^*(w \vert_{p^{-1}(U)}) 
\end{equation} 
for each $U$ and for each $b \in U$ as was established.

Now since diagram (\ref{alfa}) commutes, we have 
\begin{equation}\label{alfa2}
\widetilde{i}_b^* =(\varphi_U \vert_{b \times F})^* i_b^*(\varphi_U^{-1})^*,
\end{equation}
and by substituting equation (\ref{alfa2}) in equation (\ref{alfa1}) we get
\begin{align*}
(\varphi_U \vert_{b \times F})^* i_b^*(w \vert_{p^{-1}(U)}) & = d  (\varphi_U \vert_{b \times F})^* i_b^*(\varphi_U^{-1})^*  (\tau \vert_{U \times F}) \\
& = (\varphi_U \vert_{b \times F})^* d i_b^*(\varphi_U^{-1})^*  (\tau \vert_{U \times F}).
\end{align*}

Moreover, since $(\varphi_U \vert_{b \times F})^*$ is an isomorphism we get 
\[ i_b^*(w \vert_{p^{-1}(U)}) = d i_b^*(\varphi_U^{-1})^*  (\tau \vert_{U \times F}) \]
for each $U$ and each $b \in U$.

Now we define the element $\tau_U \in \Omega(p^{-1}(U))$ by
\[ \tau_U =  (\varphi_U^{-1})^*  (\tau \vert_{U \times F}),\]
so we get
\begin{equation}\label{beta}
i_b^*(w \vert_{p^{-1}(U)}) = d i_b^*(\tau_U) 
\end{equation}
for each $U$ and each $b \in U$.

On the other hand, for each $U$, $V$ in $\{ U_\alpha \}$,
\begin{align*}
\tau_U \vert_{ p^{-1}(U) \cap p^{-1}(V) } - \tau_V \vert_{ p^{-1}(U) \cap p^{-1}(V) } & = ( \ (\varphi_U^{-1})^*  (\tau \vert_{U \times F}) \ ) \vert_{ p^{-1}(U) \cap p^{-1}(V)   } - ( \ (\varphi_V^{-1})^*  (\tau \vert_{V \times F}) \ ) \vert_{ p^{-1}(U) \cap p^{-1}(V)   } \\
& = (\varphi_{U \cap V}^{-1})^*  (\tau \vert_{U\cap V \times F}) - (\varphi_{U \cap V}^{-1})^*  (\tau \vert_{U\cap V \times F}) \\
& = 0.
\end{align*}

Therefore, the collection $\{ \tau_U \}$ with $U$ in $\{ U_\alpha \}_{\alpha \in J}$ defines a global form $\widetilde{\tau} \in \Omega^{r-1}(M)$ such that $\widetilde{\tau}\vert_{p^{-1}(U)} = \tau_U$. Thus, by equation (\ref{beta}), we have that
\[ i_b^*(w \vert_{p^{-1}(U)}) = d i_b^*(\tau \vert_{p^{-1}(U)}) = i_b^*(d\tau \vert_{p^{-1}(U)}) \]
for each $U$ in the good cover and for each $b \in U$. Such element satisfies equation (\ref{beta2}), and we conclude with the proof of the theorem.
\end{proof}

\section*{Acknowledgements}
The author Rafael Ramos acknowledges Dr. Yuri Vorobiev for the useful discussions and advice. The research was partially supported by a CONACyT research grant.

\bibliographystyle{abbrv}

\end{document}